\newtheorem{thm}{Theorem}[section]
\newtheorem{defn}{Definition}[section]
\newtheorem{prop}{Proposition}[section]
\newtheorem{lem}{Lemma}[section]
\newtheorem{rem}{Remark}[section]
\newtheorem{cor}{Corollary}[section]
\newtheorem{exmpls}{Examples}[section]
\journal{Indian Journal of Mathematics}
\begin{document}

\begin{frontmatter}

\title{Conformal Riemannian morphisms between Riemannian manifolds}

\author{R.B. Yadav \fnref{myfootnote}\corref{mycorrespondingauthor}}
\address{Sikkim University, Gangtok, Sikkim, 737102, \textsc{India}}
\ead{rbyadav15@gmail.com}
\author{Srikanth Venkatesh Kuppum}
\address{Indian Institute of Technology Guwahati, Assam 781039, \textsc{India}}
\ead{kvsrikanth@iitg.ernet.in}
\begin{abstract}
 In this article we introduce conformal Riemannian morphisms. The idea of conformal Riemannian morphism generalizes the notions of isometric immersions,  Riemannian submersions and   Riemannian maps.

\end{abstract}

\begin{keyword}
\texttt{Isometric Immersion, Riemannian Submersion, Riemannian map, Conformal Riemannian map}
\MSC[2010] 53C20\sep 53C42 \sep  53B20 \sep 58A05 \sep 57R35
\end{keyword}

\end{frontmatter}



\section{Introduction}
A generalization of the notions of isometric immersion and Riemannian submersion between Riemannian manifolds was introduced by Arthur E. Fischer in 1992 and was named as Riemannian map \cite{Fischer}.  The theory of isometric immersions originated from Gauss's studies on surfaces in  Euclidean spaces. On the other hand, the study of Riemannian submersions between Riemannian manifolds was initiated by O'Neil \cite{O'Neill} and Gray \cite{Gray}. Later on,  Sahin gave a generalization of  Riemannian maps and defined Conformal Riemannian maps \cite{Sahin}. The immersions and submersions are generalized and unified by subimmersions. Fischer introduced Riemannian map as a Riemannian analog  of subimmersion.\\
In this paper first we introduce  conformal Riemannian morphisms and give some examples. Next, we give some necessary and sufficient conditions for a smooth function  between Riemannian manifolds to be conformal Riemannian morphism, Theorem \ref{rmnp12}, \ref{rmnp13}. Also, we  prove that a conformal Riemannian morphism on a compact Riemannian manifold is subimmersion (Definition \ref{rmnd1}),  Proposition  \ref{rmnp16}. Thus conformal Riemannian morphism can be seen as   yet another Riemannian   analog of the notion of subimmersion .  Next, we prove that subimmersions are equivalent to generalized conformal maps (Definition \ref{rb557}) if domain has sufficiently large dimension, Theorem \ref{rb558}. Finally,  we give  local structure of conformal Riemannian morphisms, Theorem \ref{rmn30}.\\
Throughout this paper $M$ and $ N$ denote Riemannian manifolds  with Riemannian metrics $g_M$ and $g_N$, respectively.
From \cite{Fischer}, \cite{Sahin} and \cite{Baired} we recall following definitions.
  \begin{defn}\label{rmn6}
   A smooth function $f:M\to N$   is called \textbf{ conformal Riemannian map}   if there exist a linear subspace $H_x\subset T_xM$, a number $\wedge_f(x)\in \mathbb{R}$ and $rank(d_xf)\ne 0$, for each   $x\in M$  such that
  \begin{enumerate}
    \item $H_x=\ker(df_x)^{\perp}$, for each $x\in M,$
    \item  $g_N(f(x))(df_xX,df_xY)=\wedge_f(x) g_M(x)(X,Y),$ for every  $X,Y \in H_x$, for each $x\in M.$
  \end{enumerate}
  From definition, it is clear that $\wedge_f(x)>0$ and the function $\wedge_f:M\to \mathbb{R}^{+}$ is smooth. Thus  a smooth function $f:M\to N$ with $d_xf>0$, $\forall$ $x\in M$  is \textbf{ conformal Riemannian map}   if there exist a linear subspace $H_x\subset T_xM$, for each   $x\in M$ and a  smooth function $\wedge_f:M\to \mathbb{R}^{+}$ such that
  \begin{enumerate}
    \item $H_x=\ker(df_x)^{\perp}$, for each $x\in M,$
    \item  $g_N(f(x))(df_xX,df_xY)=\wedge_f(x) g_M(x)(X,Y),$ for every  $X,Y \in H_x$, for each $x\in M.$
  \end{enumerate}
If $\wedge_f(x)=1,$ for all $x\in M,$ then a conformal Riemannian map is called a \textbf{ Riemannian map}. If $\ker(df_x)=\{0\}$, for each $x\in M,$ then a conformal Riemannian map is called a \textbf{conformal immersion}. If $\wedge_f(x)=1,$ for each $x\in M,$ then a conformal immersion  $f$ is said to be \textbf{ isometric immersion}.  If $df_x$ is surjective, for each x in M, then a conformal Riemannian map is called a \textbf{ horizontally conformal submersion}. If $df_x$ is surjective, for each x in M, then a  Riemannian map is called a \textbf{ Riemannian submersion}. $f$ is called an \textbf{ isometry}  if it is isometric immersion as well as Riemannian submersion.
\end{defn}
\section{Geometric functions and Conformal Riemannian morphisms}

\begin{defn} \label{def:ipm} Let V and W be inner--product spaces over the field of real numbers and $f:V\rightarrow W$ be a linear transformation. Denote the kernel and range of $f$ by $K$ and $R(f)$, respectively. We say that $f$ is a  \textbf{geometric function}  if there
exists a subspace $C$ of $V$ such that $V=K\oplus C$,  and
$f_{|C} :C\rightarrow R(f)\mathrm{\; is \;  \; conformal,}$ that is, for some $r>0$, $\langle f(u),f(v)\rangle =r\langle u,v\rangle$, for every $ u,v\in C .$
 We call $C$ a \textbf{Conf subspace} corresponding to  $f$ and denote it  by $\mathrm{Conf}(f)$. We call r \textbf{conformality factor} associated to $f$. We denote the collection of all geometric functions from an inner product space V to another inner product space W by $Geom(V,W).$
\end{defn}
We now give two general results on the existence of geometric functions.
\begin{prop}\label{rb555}
 Let V, W be inner product spaces over $\mathbb{R}$ and $T:V\to W$ be a linear map of rank k. If $\dim V\ge 2k-1$, then T is geometric.
\end{prop}
\begin{proof}
 Let $dimV=n\ge 2k-1$ and $T^*$ be adjoint of T. Since $T^*T$ is self adjoint, there exists an orthonormal basis $\{u_i\}_{1}^n$ of V such that   $T^*Tu_i=\lambda_iu_i$  for  $\lambda_i\in\mathbb{R}$, for all $i=1,2,\cdots, k$ and $T^*Tu_{k+1}=\cdots =T^*Tu_n=0$.
 Since $0\le \langle  Tu_i,Tu_i\rangle =\langle u_i,T^*Tu_i\rangle=\langle u_i,\lambda_iu_i\rangle=\lambda_i\|u_i\|^2$
  and $\mathrm{rank}( T)=k$,  $\lambda_i> 0,$ for all $1\le i\le k.$ We can reorder $\{u_i\}_{1}^n$ in such a way that $0<\lambda_1<\lambda_i$, for all $2\le i\le k.$ Now define $\alpha_i=\sqrt{\frac{\lambda_1}{\lambda_i}}$, $E_1=u_1$, $E_i=\alpha_i u_i+\sqrt{1-\alpha_i^2}u_{k+i-1}$, for all $2\le i\le k. $ Clearly $\{E_i\}_{1}^k$ is orthonormal subset of V and $\|TE_i\|=\lambda_1$, for all $1\le i\le k.$  So, T is geometric with $\mathrm{conf}(T)=\mathrm{span} \{E_i\}_{1}^k$ and conformality factor $\sqrt{\lambda_1}.$
\end{proof}
\begin{prop}\label{rb556}
  Let V, W be inner product spaces over $\mathbb{R}$,  $T:V\to W$ be a linear map of rank k and  $\dim V\ge 2k$. Then T is geometric.  Moreover, there exist uncountably many $\mathrm{conf}(T)$ subspaces such that  conformality factors associated to different $\mathrm{conf}(T)$ subspaces are different.
\end{prop}
\begin{proof}
   Let $dimV=n\ge 2k$ and $T^*$ be adjoint of T. As in the proof of Proposition \ref{rb555},  there exists an orthonormal basis $\{u_i\}_{1}^n$ of V such that $T^*Tu_i=\lambda_iu_i$  for  $0<\lambda_i\in\mathbb{R}$, for all $i=1,2,\cdots, k$ and $T^*Tu_{k+1}=\cdots =T^*Tu_n=0$.
  We can reorder $\{u_i\}_{1}^n$ in such a way that $0<\lambda_1<\lambda_i$, for all $2\le i\le k.$  Choose  $0<\alpha<\lambda_1.$  Now define $\alpha_i=\sqrt{\frac{\alpha}{\lambda_i}}$,  $E_i=\alpha_i u_i+\sqrt{1-\alpha_i^2}u_{k+i}$, for all $1\le i\le k. $ Clearly $\{E_i\}_{1}^k$ is orthonormal subset of V and $\|TE_i\|=\alpha$, for all $1\le i\le k.$  So, T is geometric with $\mathrm{conf}(T)=\mathrm{span} \{E_i\}_{1}^k$ and conformality factor $\sqrt{\alpha}.$ Clearly for different values of $\alpha$ in the interval $(0,\lambda_1)$, $\mathrm{conf}(T)$ is different.  This completes the proof.
\end{proof}

Next we define conformal Riemannian morphisms and conformal Riemannian submersions. 
\begin{defn}\label{rmn1}
 A smooth function $f:M\to N$   is called  \textbf{conformal Riemannian morphism}   if  there exist a linear subspace $H_x\subset T_xM$, for each   $x\in M$ and a  smooth function $\wedge_f:M\to \mathbb{R}^{+}$ such that
  \begin{enumerate}
    \item $H_x\oplus \ker (df_x)=T_xM$, for each $x\in M;$ and
    \item  $g_N(f(x))(df_xX,df_xY)=\wedge_f(x) g_M(x)(X,Y),$ for every  $X,Y \in H_x$, for each $x\in M.$\\

 \end{enumerate}

  Thus a smooth function $f:M\to N$   is called  \textbf{conformal Riemannian morphism}   if there exists a  smooth function $\wedge_f:M\to \mathbb{R}^{+}$ such that $df_x$ is a geometric function  with conformality factor $\wedge_f(x),$ for all $x\in M.$
   The  function $\wedge_f$ is called   \textbf{conformality  factor}  associated to $ f$.\\ If $\wedge_f(x)=1,$ for each  $x\in M$, we call $f$ a \textbf{Riemannian morphism}.

\end{defn}
\begin{defn}\label{rmn2}
A smooth function $f:M\to N$  is  called a \textbf{ conformal Riemannian submersion}  if  there exist a linear subspace $H_x\subset T_xM$, for each   $x\in M$ and a  smooth function $\wedge_f:M\to \mathbb{R}^{+}$ such that
  \begin{enumerate}
  \item $df_x$ is surjective, for each x in M;
    \item $H_x\oplus \ker (df_x)=T_xM$, for each $x\in M;$ and
    \item  $g_N(f(x))(df_xX,df_xY)=\wedge_f(x) g_M(x)(X,Y),$ for every  $X,Y \in H_x$, for each $x\in M.$\\

 \end{enumerate}
 Clearly, every conformal Riemannian  submersion is a conformal Riemannian  morphism.
\end{defn}
Next, we give some examples of conformal Riemannian morphisms. We let $\mathbb{R}^n$ denote the Euclidean n-space taken with its standard  flat metric.
\begin{exmpls}
\begin{enumerate}
\item   If $F:M\to N$ is a  conformal Riemannian map, then F is a conformal Riemannian morphism with  $\ker(dF_x)^{\perp}=H_x=Conf(df_x),$ $\forall x\in M_1.$
\item Define $f:\mathbb{R}^4\to\mathbb{R}^4$ by $f(x_1,x_2,x_3,x_4)=(e^{x_3}(x_1-x_2),0,0,e^{x_3}(x_4-x_2)),$   for all $(x_1,x_2,x_3,x_4)\in \mathbb{R}^4.$ Then $f$ is a smooth function and  for any $a\in \mathbb{R}^4,$ we have,
      $$df_{(a_1,a_2,a_3,a_4)}=\left( \begin{array}{cccc}
                                  e^{a_3} & - e^{a_3} &  e^{a_3}(a_1-a_2) & 0 \\
                                  0 & 0 & 0 & 0 \\
                                  0 & 0 & 0 & 0 \\
                                  0 & -e^{a_3} & e^{a_3}(a_4-a_2) &  e^{a_3}
                                \end{array}\right).$$
                                Note that $df_a(e_1)=e^{a_3}e_1$, $df_a(e_4)=e^{a_3}e_4$, $df_a(e_1+e_4+e_2)=0$, $\;\;\;\;\;df_a(a_2-a_1,0,1,a_2-a_4)=0$.  Let $K=\mathrm{span}\{(1,1,0,1), (a_2-a_1, 0, 1, a_2-a_4)\}$ and $T=\mathrm{span}\{e_1,e_4\}.$  Then $(df_a)_{|K}=0$ and $f$ is conformal Riemannian morphisms with $\mathrm{Conf}(df_a)=T$ and $\wedge_f(a)=e^{2a_3}$, for all  $a\in \mathbb{R}^4$. Note that $\mathrm{Conf}(df_a)$ and $\ker(df_a)$ are not orthogonal in this example. So $f$ is not a conformal Riemannian map.

\item From  Proposition \ref{rmnp1} it is clear that every  smooth function $f:\mathbb{R}^n\to \mathbb{R}$ is a conformal Riemannian morphism if and only if it is a constant function or has nowhere zero  gradient.
\item\label{rmn10} Every smooth curve $c:\mathbb{R}\to (M,g_M)$ is conformal Riemannian morphism if and only if  it is a constant curve or $c^{\prime}(t)\ne 0$, for all $t\in \mathbb{R}$. Note that if c is a curve which is a  conformal Riemannian morphism, then $\wedge_c(t)=g_M(t)(c^{\prime}(t),c^{\prime}(t)).$
\item A linear isomorphism $A:\mathbb{R}^n\to\mathbb{R}^n$  is a conformal Riemannian morphism  if and only if $A$ is a scalar multiple of an orthogonal transformation.

\item  A conformal Riemannian morphism need not be a harmonic map,  and a harmonic map need not be conformal Riemannian morphism. For example, the curve $c:\mathbb{R}^{+} \to \mathbb{R}^2 $ given by $c(t)=(t^3,t^6)$ is conformal Riemannian morphism but not harmonic. Also, $f:\mathbb{R}^2\to \mathbb{R}^2$ defined by $f(x,y)=(2x,3y)$ is harmonic map but not conformal Riemannian morphism.
\item From \cite{Fuglede}, we   have following characterization of harmonic morphisms: a smooth map $\phi :M\to N$  between Riemannian  manifolds is a harmonic morphism if and only if it is both harmonic and a (horizontally) conformal submersion away from points where $d\phi=0$. So we conclude that a harmonic morphism is conformal Riemannian morphism  away from points where $d\phi=0$ but converse need not be true because a conformal Riemannian morphism may not be a harmonic map.
     \item Composition of conformal Riemannian morphisms need not be a conformal Riemannian morphism. For example, $f:\mathbb{R}\to\mathbb{R}^2$ given by $f(x)=(x^2+x,x^2)$, and $g:\mathbb{R}^2\to\mathbb{R}$ given by $g(x,y)=x$ are conformal Riemannian morphisms but $g\circ f(x)=x^2+x$ is not conformal Riemannian morphism, from Example \ref{rmn10}, because $(g\circ f)^{\prime}(-\frac{1}{2})=0$ and  $g\circ f$ is not constant.
\end{enumerate}
\end{exmpls}
\section{Necessary and sufficient conditions for a map to be  conformal Riemannian morphism }\label{sec:prm}
In this section we give some necessary and sufficient conditions for a map between Riemannian manifolds to be conformal Riemannian morphism.
\begin{prop}\label{rmnp1}
Let $f:M\to \mathbb{R}$ be a smooth function on a connected Riemannian manifold $M$. Then $f$ is a conformal Riemannian morphism if and only if $f$ is constant or it has nowhere zero gradient.
\end{prop}
\begin{proof}
 If $f$ is constant,  then it is easy to see that $f$ is a conformal Riemannian morphism, take $H_x=0$, $\wedge_f(x)=1$, $\forall x\in M$. Let $\mathrm{grad}(f)$ denote the  gradient of  $f$ which is a vector field defined by $df_x (X)=g_M(x)(\mathrm{grad}( f)(x),X)$, for each  $X\in T_xM$, and   $K_x=\{\mathrm{span}(\mathrm{grad}(f))\}^{\perp},$ for all $x\in M$. We have,  $df_x(\lambda\; \mathrm{grad} (f)(x))=g_M(x)(\mathrm{grad} (f)(x),\lambda\; \mathrm{grad} (f)(x))=\lambda \;\|\mathrm{grad} (f)(x)\|^2$ and  $df_x{|K_x}\equiv 0.$ Now\\
 $ \|\mathrm{grad}( f)(x)\|^2g_M(x)(\lambda_1\;\mathrm{grad} ( f)(x),\lambda_2\; \mathrm{grad}( f)(x))$
 \begin{eqnarray*}
   &=& \lambda_1\;\lambda_2\;\|\mathrm{grad}( f)(x)\|^4 \\
    &=& \lambda_1\;\|\mathrm{grad} (f)(x)\|^2 .\lambda_2\;\|\mathrm{grad} (f)(x)\|^2\\
    &=& df_x(\lambda_1 \;\mathrm{grad} (f)(x)).df_x(\lambda_2\; \mathrm{grad} (f)(x))
 \end{eqnarray*}

   From above discussion, we conclude that $f$ is a conformal Riemannian morphism with $\mathrm{Conf}(df_x)=\mathrm{span}(\mathrm{grad} (f)(x))$, $\wedge_f(x)=\|(\mathrm{grad} f)(x)\|^2 $ $\iff$ $f$ is constant or $\mathrm{grad}(f)(x)\ne 0, \;\forall x\in M.$
\end{proof}

\begin{cor}\label{rmnp2}
   Let $f:M\to \mathbb{R}$ be a smooth function. If $\|\mathrm{grad}( f)(x)\|=1$, for each $x\in M$,  then $f$ is a  Riemannian map.
\end{cor}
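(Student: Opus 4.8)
The plan is to read this off directly from Proposition \ref{rmnp31}. The point is that $\|\mathrm{grad}(f)(x)\|=1$ for every $x\in M$ forces $\mathrm{grad}(f)(x)\neq 0$ everywhere, so $f$ has nonvanishing gradient.

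First I would note that the computation in the proof of Proposition \ref{rmnp31} is purely pointwise: at each $x$ it uses the splitting $T_xM=K_x\oplus\mathrm{span}(\mathrm{grad}(f)(x))$, where $K_x=\ker(df_x)$, together with $df_x|_{K_x}\equiv 0$ and the conformality identity on $\mathrm{span}(\mathrm{grad}(f)(x))$. Connectedness plays no role in that step --- it only enters the ``only if'' direction of the proposition --- so the conclusion applies verbatim here (equivalently, one applies Proposition \ref{rmnp31} on each connected component of $M$, on every one of which $f$ has nonvanishing gradient). Thus for each $x\in M$ the linear map $df_x$ is a geometric function with $\mathrm{Conf}(df_x)=\mathrm{span}(\mathrm{grad}(f)(x))$ and conformality factor $\|\mathrm{grad}(f)(x)\|^2$.

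Finally, the hypothesis gives $\|\mathrm{grad}(f)(x)\|^2=1$ for all $x$, so the Riemannian factor produced is the constant function $\wedge_f\equiv 1$, which is smooth and $\mathbb{R}^{+}$-valued; by Definition \ref{rmn1} this exhibits $f$ as a conformal Riemannian morphism (indeed as a Riemannian morphism). I do not anticipate any real obstacle: the only two things to check are that $\mathrm{span}(\mathrm{grad}(f)(x))$ is genuinely complementary to $\ker(df_x)$, which holds because $df_x(\mathrm{grad}(f)(x))=\|\mathrm{grad}(f)(x)\|^2=1\neq 0$, and that the resulting Riemannian factor is smooth, which is immediate since it is constant.
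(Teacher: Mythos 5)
Your proposal is correct and follows essentially the same route as the paper, which simply derives the corollary from Proposition \ref{rmnp31} using that $\|\mathrm{grad}(f)(x)\|=1$ forces a nonvanishing gradient. Your additional remark handling the connectedness hypothesis (absent from the corollary but present in the proposition) by working componentwise or pointwise is a reasonable extra care that the paper's one-line proof glosses over.
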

\begin{proof}
 From the  proof of  the Proposition \ref{rmnp1}, we have $\mathrm{Conf}(df_x)=\mathrm{span}(\mathrm{grad} (f)(x))$ and $Ker (df_x)=\{\mathrm{span}(\mathrm{grad}(f))\}^{\perp}$, $\wedge_f(x)=\|\mathrm{grad}( f)(x)\|^2=1$. So $f$ is a  Riemannian map.
\end{proof}

\begin{defn}\label{rmn4}
    Let  $f:M\to N$ be a smooth function. Let $H_x\subset T_xM$ be a linear subspace,   for each   $x\in M$ such that  $H_x\oplus \ker (df_x)=T_xM$.
    \begin{itemize}
      \item Define, $\forall x\in M$, a linear  function $({df}_{H_x})^{\diamond}:T_{f(x)}N\to T_xM$ by
$$ ({df}_{H_x})^{\diamond}(X) = \begin{cases}
      ((df_x)_{|H_x})^*(X), & \mbox{if } X\in \mathrm{range}(df_x) \\
      0, & \mbox{if } X\in \mathrm{range}(df_x)^{\perp}. \end{cases}$$ Here $((df_x)_{|H_x})^*$ is adjoint of $((df_x)_{|H_x}):H_x\to \mathrm{range}(df_x).$
      \item Define,  $\forall x\in M$,  $P_{H_x}:T_xM\to T_xM$ by $P_{H_x}(x)=({df}_{H_x})^{\diamond}\circ (df_x).$
      \item Define,  $\forall x\in M$,  $Q_{H_x}:T_{f(x)}N\to T_{f(x)}N$ by $Q_{H_x}(x)=(df_x)\circ ({df}_{H_x})^{\diamond}.$
    \end{itemize}

   \end{defn}

\begin{thm}\label{rmnp13}
  Let  $f:M\to N$  be a smooth function. Then $f$ is conformal Riemannian morphism if and only if  there exist a smooth function $\wedge_f:M\to \mathbb{R}^{+}$ and a subspace $H_x\subset T_xM$ with $T_xM=H_x\oplus \ker{df_x}$ such that  $P_{H_x}\circ P_{H_x}=\wedge_f(x)P_{H_x},$ for all $x\in M$.
\end{thm}

\begin{proof}
$f$ is conformal Riemannian morphism if and only if there exists a smooth function $\wedge_f:M\to \mathbb{R}^{+}$ and a subspace $H_x\subset T_xM$ with $T_xM=H_x\oplus \ker{df_x}$ such that $\wedge_f(x)g_M(x)(X_1,Y_1)=g_N(f(x))(df_xX_1,df_xY_1),$ for all $X_1,Y_1\in H_x$. Since, for all $X_1\in H_x,$  $Y_1\in T_xM,$ $g_N(f(x))(df_xX_1,df_xY_1)=g_M(x)(X_1,((df_x)_{|H_x})^*\circ df_xY_1)=g_M(X_1,(df_{H_x})^{\diamond}\circ df_xY_1),$  $f$ is conformal Riemannian morphism if and only if there exists a smooth function $\wedge_f:M\to \mathbb{R}^{+}$ and a subspace $H_x\subset T_xM$ with $T_xM=H_x\oplus \ker{df_x}$ such that
\begin{equation}\label{eqn1}
  \wedge_f(x)g_M(x)(X_1,(df_{H_x})^{\diamond}\circ df_xY_1)=g_N(f(x))(df_xX_1,df_x\circ(df_{H_x})^{\diamond}\circ df_xY_1),
\end{equation}
for all $X_1\in H_x, Y_1\in T_xM$.
Equation \ref{eqn1} is equivalent to
\begin{equation}\label{eqn2}
\wedge_f(x)g_M(X_1,(df_{H_x})^{\diamond}\circ df_xY_1)=g_M(x)(X_1,(df_{H_x})^{\diamond}\circ df_x\circ (df_{H_x})^{\diamond}\circ df_xY_1),
\end{equation}
 for all $X_1\in H_x,$  $Y_1\in T_xM.$
Since $P_{H_x}= (df_{H_x})^{\diamond}\circ(df_x),$  Equation  \ref{eqn2} is equivalent to
\begin{equation}\label{eqn3}
  \wedge_f(x)g_M(x)(X_1,P_{H_x}Y_1)=g_M(x)(X_1,P_{H_x}\circ P_{H_x}Y_1),
\end{equation}
 for all $X_1\in H_x,$  $Y_1\in T_xM.$ Since $range(P_{H_x})=H_x$,  Equation  \ref{eqn3} is equivalent to  $P_{H_x}\circ P_{H_x}=\wedge(x)P_{H_x}$. Hence we conclude the result.

\end{proof}
\begin{thm}\label{rmnp12}
   Let  $f:M\to N$  be a smooth function. Then $f$ is conformal Riemannian morphism if and only if  there exist a smooth function $\wedge_f:M\to \mathbb{R}^{+}$ and a subspace $H_x\subset T_xM$ with $T_xM=H_x\oplus \ker{df_x}$ such that  $Q_{H_x}\circ Q_{H_x}=\wedge_f(x)Q_{H_x},$ for all $x\in M$.
\end{thm}

\begin{proof}
  $f$ is conformal Riemannian morphism if and only if there exists a smooth function $\wedge_f:M\to \mathbb{R}^{+}$ and a subspace $H_x\subset T_xM$ with $T_xM=H_x\oplus \ker{df_x}$ such that $\wedge_f(x)g_M(x)(X_1,Y_1)=g_N(f(x))(df_xX_1,df_xY_1),$ for all $X_1,Y_1\in H_x$. Since $range(({df}_{H_x})^{\diamond})=H_x,$ $\wedge_f(x)g_M(x)(X_1,Y_1)=g_N(f(x))(df_xX_1,df_xY_1),$ for all $X_1,Y_1\in H_x$, if and only if, for all $X_2\in range(df_x)$, $Y_2\in T_{f(x)}N$ \begin{equation}\label{eqn4}
  \scriptsize{  \wedge_f(x)g_M(x)((df_x)_{|H_x})^*X_2,(df_{H_x})^{\diamond}Y_2)=g_N(f(x))(df_x\circ((df_x)_{|H_x})^*X_2,df_x\circ(df_{H_x})^{\diamond}Y_2).}
   \end{equation} Equation \ref{eqn4} is equivalent  to
    \begin{equation}\label{eqn5}
    \scriptsize{
       \wedge_f(x)g_N(f(x))(X_2,df_x\circ(df_{H_x})^{\diamond}Y_2)=g_M(x)((df_x)_{|H_x})^*X_2,((df_x)_{|H_x})^*\circ df_x\circ(df_{H_x})^{\diamond}Y_2),}
    \end{equation}  for all $X_2\in range(df_x)$, $Y_2\in T_{f(x)}N$. Equation \ref{eqn5} is equivalent to
  \begin{equation}\label{eqn6}
   \scriptsize{ \wedge_f(x)g_N(f(x))(X_2,df_x\circ(df_{H_x})^{\diamond}Y_2)=g_N(f(x))(X_2,((df_x)_{|H_x})\circ (df_{H_x})^{\diamond}\circ df_x\circ(df_x)^{\diamond}Y_2),}
  \end{equation}
   for all $X_2\in range(df_x)$, $Y_2\in T_{f(x)}N$. Equation \ref{eqn6} is equivalent to
 \begin{equation}\label{eqn7}
  \scriptsize{ \wedge_f(x)g_N(f(x))(X_2,df_x\circ(df_{H_x})^{\diamond}Y_2)=g_N(f(x))(X_2,df_x\circ (df_{H_x})^{\diamond}df_x\circ(df_{H_x})^{\diamond}Y_2),}
 \end{equation}
  for all $X_2\in range(df_x)$, $Y_2\in T_{f(x)}N$.

  We have $Q_{H_x}=(df_x)\circ (df_{H_x})^{\diamond}$.   So, Equation \ref{eqn7} holds for all  $X_2\in range(df_x)$, $Y_2\in T_{f(x)}N$, if and only if  $Q_{H_x}\circ Q_{H_x}=\wedge(x)Q_{H_x}$, for each $x\in M$. Hence we conclude the result.

\end{proof}
\begin{prop}\label{rmnp11}
  Let  $f:M\to N$ be a smooth function.  If  $H_x=(\ker (df_x))^{\perp}$, then $ (df_{H_x})^{\diamond} =(df_x)^{*}.$
\end{prop}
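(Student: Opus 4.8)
The plan is to unwind the definitions of the two maps $(df_{H_x})^{\diamond}$ and $(df_x)^*$ under the hypothesis $H_x = (\ker df_x)^{\perp}$ and check they agree on each of two complementary subspaces of $T_{f(x)}N$, namely $\mathrm{range}(df_x)$ and $\mathrm{range}(df_x)^{\perp}$, which span $T_{f(x)}N$ orthogonally. By linearity, agreement on these two pieces gives equality of the operators.

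First I would recall that $(df_x)^* : T_{f(x)}N \to T_xM$ is the metric adjoint of $df_x$, and that its image is always contained in $(\ker df_x)^{\perp} = H_x$; moreover $(df_x)^*$ annihilates $\mathrm{range}(df_x)^{\perp}$ (since if $X \perp \mathrm{range}(df_x)$ then $g_M((df_x)^*X, Y) = g_N(X, df_x Y) = 0$ for all $Y$, so $(df_x)^*X = 0$). On the other side, by Definition \ref{rmn4}, $(df_{H_x})^{\diamond}$ is defined to be $0$ on $\mathrm{range}(df_x)^{\perp}$, so the two maps already coincide there. It remains to treat $X \in \mathrm{range}(df_x)$.

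For $X \in \mathrm{range}(df_x)$, $(df_{H_x})^{\diamond}(X) = ((df_x)_{|H_x})^*(X)$, where $((df_x)_{|H_x})^* : \mathrm{range}(df_x) \to H_x$ is the adjoint of the restricted, and now bijective, map $(df_x)_{|H_x} : H_x \to \mathrm{range}(df_x)$. I would verify that $((df_x)_{|H_x})^*(X)$ satisfies the defining property of $(df_x)^*(X)$: for every $Y \in T_xM$, write $Y = Y_H + Y_K$ with $Y_H \in H_x$, $Y_K \in \ker df_x$; then
\[
g_M\bigl((df_x)_{|H_x})^*(X), Y\bigr) = g_M\bigl(((df_x)_{|H_x})^*(X), Y_H\bigr) + 0 = g_N\bigl(X, df_x Y_H\bigr) = g_N\bigl(X, df_x Y\bigr),
\]
using that $((df_x)_{|H_x})^*(X) \in H_x$ is orthogonal to $Y_K \in \ker df_x = H_x^{\perp}$, and that $df_x Y_K = 0$. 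This is exactly the characterizing identity for $(df_x)^*(X)$, so by uniqueness of the adjoint $(df_{H_x})^{\diamond}(X) = (df_x)^*(X)$.

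There is no real obstacle here — the statement is essentially a bookkeeping lemma reconciling the ad hoc definition of $(df_{H_x})^{\diamond}$ with the genuine adjoint in the special case of the orthogonal complement. The only point requiring mild care is making sure the orthogonal decomposition $T_xM = H_x \oplus \ker df_x$ is used correctly (so that the $\ker df_x$-component drops out on the left because $H_x$ is its orthogonal complement) and that the splitting $T_{f(x)}N = \mathrm{range}(df_x) \oplus \mathrm{range}(df_x)^{\perp}$ is likewise orthogonal; both hold since $H_x$ was chosen to be $(\ker df_x)^{\perp}$ and the orthogonal complement is always defined with respect to the ambient metric.
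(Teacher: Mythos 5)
Your proof is correct and follows essentially the same route as the paper's: both arguments split $T_{f(x)}N$ into $\mathrm{range}(df_x)\oplus\mathrm{range}(df_x)^{\perp}$, note that both operators vanish on the second summand, and use the orthogonal decomposition $T_xM=H_x\oplus\ker(df_x)$ together with the defining property of the adjoint to identify them on the first. Your write-up is, if anything, slightly more explicit than the paper's at the key step where the $\ker(df_x)$-component of $Y$ is discarded.
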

\begin{proof}

  Suppose that $H_x=(\ker (df_x))^{\perp}$. Then, for all $X\in T_xM$ and $Y\in T_{f(x)}N$, we have $X=X_1+X_2$ and $Y=Y_1+Y_2$ for unique $X_1\in H_x$, $X_2\in (\ker (df_x)),$ $Y_1\in range(df_x)$ and $Y_2\in (range(df_x))^{\perp}.$  We have, $$g_N(f(x))((df_x)(X),Y)=g_M(X,(df_x)^*(Y)),$$ for all $X\in T_xM$, $Y\in T_{f(x)}N$, implies that $range((df_x)^*)=(\ker (df_x))^{\perp}$ and  $range(df_x)=(\ker ((df_x)^*))^{\perp},$ for all $x\in M$.  So, we have $$g_N(f(x))((df_x)(X),Y)=g_M(X,(df_x)^*(Y)),$$ for all $X\in T_xM$, $Y\in T_{f(x)}N$, if and only if  $$g_N(f(x))((df_x)(X_1),Y_1)=g_M(X_1,(df_x)^*(Y_1)),$$  for all $X_1\in H_x$, $Y_1\in range(df_x).$ So, $(df_{H_x})^{\diamond}_{|range(df_x)}=((df_x)^{\ast})_{|range(df_x)}$. Since $(df_{H_x})^{\diamond}_{|(range(df_x))^{\perp}}\equiv 0$, $((df_x)^{\ast})_{|(range(df_x))^{\perp}}\equiv 0$, this implies that $ (df_{H_x})^{\diamond} =(df_x)^{*}$.
\end{proof}
\begin{rem}\label{rmnrem1}
  \begin{enumerate}
    \item From Proposition \ref{rmnp11}, we conclude that $ (df_{H_x})^{\diamond}$ is a generalization of $(df_x)^{*}.$
    \item From Theorem \ref{rmnp13},  $f$ is Riemannian morphism if and only if there exists a subspace $H_x\subset T_xM$ with $T_xM=H_x\oplus \ker{df_x}$ such that  $P_{H_x}$ is a projection operator, that is, $P_{H_x}\circ P_{H_x}=P_{H_x}$.
      \item From Theorem \ref{rmnp13}, $f$ is Riemannian morphism if and only if there exists a subspace $H_x\subset T_xM$ with $T_xM=H_x\oplus \ker{df_x}$ such that  $Q_{H_x}$ is a projection operator, that is, $Q_{H_x}\circ Q_{H_x}=Q_{H_x}$.
  \end{enumerate}
\end{rem}

\section{Subimmersions and generalized-conformal maps}

\begin{defn}\label{rmnd1}
  A smooth function $f:M\to N$ is a subimmersion at $x\in M$, if there is an open set U containing x, a manifold P, a submersion $S:U\to P,$ and an immersion $J:P\to N$ such that $f{|U}=J\circ S.$  A smooth function $f:M\to N$ is a subimmersion if it is a subimmersion at each $x\in M.$

\end{defn}
 Now we state a proposition from \cite{Fischer} which relates subimmersions and maps of locally constant rank.

\begin{prop}\label{rmnp15}
   A smooth function $f:M\to N$ is a subimmersion if and only if the rank function $x\mapsto rank(df_x)$ ( equivalently, if and only if the nullity function $x\mapsto \dim(\ker(df_x))$) is locally constant, and hence constant on the connected components of M.
 \end{prop}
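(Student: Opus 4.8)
The plan is to prove the two implications separately and then obtain the parenthetical equivalence and the clause about connected components from elementary facts. For the forward direction, suppose $f$ is a subimmersion and fix $x\in M$. Take the data supplied by Definition \ref{rmnd1}: an open $U\ni x$, a manifold $P$, a submersion $S:U\to P$ and an immersion $J:P\to N$ with $f_{|U}=J\circ S$. For every $y\in U$ the chain rule gives $df_y=dJ_{S(y)}\circ dS_y$. Since $dS_y$ is surjective its image is all of $T_{S(y)}P$, and since $dJ_{S(y)}$ is injective it carries that space isomorphically onto its image; hence $\mathrm{rank}(df_y)=\dim P$ for every $y\in U$. Thus the rank function is constant on the neighborhood $U$ of $x$, i.e.\ locally constant.

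For the converse, suppose $x\mapsto\mathrm{rank}(df_x)$ is locally constant and fix $x\in M$, so that on some neighborhood of $x$ the rank equals a constant $k$. I would invoke the constant rank theorem (in the form stated in Abraham--Marsden--Ratiu and in Dieudonn\'e): there are charts $\varphi:U\to\varphi(U)\subset\mathbb{R}^m$ around $x$ and $\psi:V\to\psi(V)\subset\mathbb{R}^n$ around $f(x)$, with $f(U)\subset V$, in which $f$ has the normal form $\psi\circ f\circ\varphi^{-1}(t_1,\dots,t_m)=(t_1,\dots,t_k,0,\dots,0)$. Let $\pi:\mathbb{R}^m\to\mathbb{R}^k$ be the projection on the first $k$ coordinates and $\iota:\mathbb{R}^k\to\mathbb{R}^n$ the inclusion onto the first $k$ coordinates. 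Set $P:=\pi(\varphi(U))$, an open subset of $\mathbb{R}^k$ and hence a manifold, and put $S:=\pi\circ\varphi:U\to P$ and $J:=\psi^{-1}\circ\iota_{|P}:P\to N$. Then $S$ is a submersion (a submersion composed with a diffeomorphism), $J$ is an immersion (an immersion composed with a diffeomorphism onto its image), and the normal form gives exactly $f_{|U}=J\circ S$. Hence $f$ is a subimmersion at $x$, and since $x$ was arbitrary, $f$ is a subimmersion.

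Finally, the rank--nullity identity $\mathrm{rank}(df_x)+\dim\ker(df_x)=\dim M$ shows that the rank function is locally constant if and only if the nullity function is; and a locally constant integer-valued function is constant on every connected subset, in particular on each connected component of $M$.

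The main obstacle, and really the only nontrivial ingredient, is the constant rank theorem, which provides the simultaneous straightening of the source and target coordinates needed for the converse. Once that normal form is available, the factorization of $f$ near $x$ into a coordinate projection followed by a coordinate inclusion is immediate, and the verification that these are a submersion and an immersion is routine. I would cite the rank theorem from the references the paper already invokes rather than reprove it.
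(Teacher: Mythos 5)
Your proposal is correct and follows essentially the same route as the paper's (much terser) proof: the forward direction by composing a submersion with an immersion to get constant rank on $U$, the converse via the constant rank theorem, and the nullity statement via rank--nullity. You have simply filled in the details that the paper leaves as a sketch.
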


\begin{proof}
  If $f$ is a subimmersion, then it is locally composition of a submersion and an immersion  ,  and hence  has locally constant rank.  Conversely, if it has locally constant then, by using rank theorem, we conclude that it is a subimmersion. Local constancy of nullity follows from rank nullity theorem.
\end{proof}
We state following theorem about subimmersion \cite{Abraham}, \cite{Ddne1}).
\begin{thm}
  Let $f:M\to N$ be a subimmersion.
  \begin{enumerate}
    \item Then for every $y\in f(M)\subset N$, $f^{-1}(y)$ is a smooth closed submanifold of M, with  $T_x(f^{-1}(y))=\ker(df_x),$ for every $x\in f^{-1}(y).$
    \item For every $x\in M,$ there exists a neighborhood U of x such that $f(U)$ is a submanifold of N with $T_{f(x)}(f(U))=range(df_x).$ If, in addition, $f$ is open or closed   onto its image, then $f(M)$ is a submanifold of N.
    \item  If $df_x:T_xM\to T_{f(x)}N$ is not surjective, then U in (2)  above may be chosen so that  $f(U)$ is nowhere dense in N.
  \end{enumerate}
\end{thm}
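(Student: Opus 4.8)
The plan is to reduce the whole statement to the rank theorem. By Proposition~\ref{rmnp15} a subimmersion has locally constant rank, so near any $x\in M$ the map $f$ has constant rank $k=\mathrm{rank}(df_x)$; write $m=\dim M$, $n=\dim N$. The first step is to apply the constant-rank theorem to get charts $(U,\varphi)$ centered at $x$ and $(V,\psi)$ centered at $f(x)$, with $\varphi(U)$ a cube $(-\varepsilon,\varepsilon)^m$ and with $[-\varepsilon,\varepsilon]^{k}\times\{0\}^{n-k}\subseteq\psi(V)$, in which $f$ takes the normal form $(t^1,\dots,t^m)\mapsto(t^1,\dots,t^k,0,\dots,0)$. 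Every assertion is then read off from this local picture.

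For (1), fix $y\in f(M)$ and $x\in f^{-1}(y)$ and take $U,V$ as above around $x$ (shrinking $U$ so that $U\subseteq f^{-1}(V)$). In the chart $\varphi$ the set $f^{-1}(y)\cap U$ is exactly the slice $\{t^1=\cdots=t^k=0\}$, i.e.\ a submanifold chart of dimension $m-k$; letting $x$ range over $f^{-1}(y)$, these charts present $f^{-1}(y)$ as an embedded submanifold of $M$ (of locally constant dimension, since the rank is constant on connected components). It is closed in $M$ because $f$ is continuous and $\{y\}$ is closed in $N$. Reading off $df_x$ in the normal form, $\ker(df_x)$ is the span of $\partial/\partial t^{k+1},\dots,\partial/\partial t^{m}$, which is precisely $T_x(f^{-1}(y))$.

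For (2), with $U,V$ as above, $f(U)=\psi^{-1}\bigl((-\varepsilon,\varepsilon)^{k}\times\{0\}^{n-k}\bigr)$ is a $k$-dimensional slice in $V$, hence a submanifold of $N$ with $T_{f(x)}f(U)=\mathrm{span}(\partial/\partial y^1,\dots,\partial/\partial y^k)=\mathrm{range}(df_x)$. For the global claim, recall that being an embedded submanifold is a condition local in $N$: given $q\in f(M)$, pick $x\in f^{-1}(q)$ and $U,V$ as above. If $f$ is open onto its image, then $f(U)=f(M)\cap W$ for some open $W\subseteq N$, and replacing $V$ by $V\cap W$ exhibits $f(M)$ near $q$ as the slice $f(U)$; a parallel argument (as in the references cited above) handles the case where $f$ is closed onto its image. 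Thus $f(M)$ is a submanifold. For (3), if $df_x$ is not surjective then $k<n$; shrinking the cube to $\varphi(U'')=(-\varepsilon/2,\varepsilon/2)^{m}$ gives $f(U'')\subseteq K:=\psi^{-1}\bigl([-\varepsilon/2,\varepsilon/2]^{k}\times\{0\}^{n-k}\bigr)$, which is compact, hence closed in the Hausdorff space $N$, and has empty interior (no open subset of $\mathbb{R}^n$ lies in $\mathbb{R}^{k}\times\{0\}$ when $k<n$), so $f(U'')$ is nowhere dense in $N$; this $U''$ is the required shrinking of $U$.

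I expect the only non-routine point to be the global part of (2): the local analysis only guarantees that each $f(U)$ is a \emph{piece} of $f(M)$, and a priori other sheets of $f(M)$ could accumulate at $f(x)$, so the openness (or closedness) of $f$ onto its image is exactly what is needed to localize $f(M)$ near each of its points to a single slice. All remaining steps are formal consequences of the rank normal form.
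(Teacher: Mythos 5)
The paper does not prove this theorem at all: it is quoted verbatim from Abraham--Marsden--Ratiu and Dieudonn\'e, with only a citation. So there is no ``paper's proof'' to match; what you have written is your own derivation, and it is the standard one. Your reduction via Proposition~\ref{rmnp15} to locally constant rank and then to the constant-rank normal form is correct, and parts (1), the local half of (2), and (3) all follow cleanly from the normal form exactly as you read them off: the fiber is the slice $\{t^1=\cdots=t^k=0\}$, the local image is the slice $(-\varepsilon,\varepsilon)^k\times\{0\}^{n-k}$ with the right tangent space, and the shrunken image sits inside a compact subset of a lower-dimensional slice, hence is nowhere dense when $k<n$.

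The one genuine soft spot is the place you yourself identify: the global half of (2) when $f$ is merely \emph{closed} onto its image. Your argument for the open case is complete ($f(U)=f(M)\cap W$ localizes $f(M)$ to a single slice near $f(x)$), but ``a parallel argument handles the closed case'' is not a proof, and the closed case is not symmetric. The natural attempt --- $f(M\setminus U)$ is closed in $f(M)$, so if $f(x)\notin f(M\setminus U)$ one can cut away the other sheets by an open set --- breaks down precisely when $f(x)\in f(M\setminus U)$, i.e.\ when some preimage of $f(x)$ lies outside $U$; one must then argue that the slices produced at the various preimages of $f(x)$ agree near $f(x)$ (using closedness to control how they accumulate), which is the actual content of the Abraham--Marsden--Ratiu proof. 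Since the paper itself treats this entire theorem as a quoted classical fact, deferring that step to the references is defensible, but you should be aware that it is the only part of the statement that does not follow formally from the rank normal form plus point-set topology.
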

\begin{thm}\label{rmnt20}
   Let $f:M\to N$ be a subimmersion. Then
   \begin{enumerate}
     \item If $f$ is injective, then $f$ is an immersion, and hence an injective immersion.
     \item If $f$ is surjective and M is connected, then $f$ is a submersion, and hence a surjective submersion.
     \item More generally, if M is connected and $f$ is open or closed onto its image, then $f(M)$ is a submanifold of N, and the range restricted map $f:M\to f(M)$ is a  surjective submersion onto $f(M).$
         \item If $f$ is bijective and M is connected , then $f$ is a diffeomorphism.
   \end{enumerate}
\end{thm}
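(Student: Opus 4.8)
The plan is to reduce each of the four assertions to the constant-rank property of subimmersions (Proposition \ref{rmnp15}), the local normal form for constant-rank maps (the rank theorem), and the structural statements on fibers and local images recorded in the theorem immediately preceding this one.

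First I would treat (1). Since $f$ is a subimmersion, by Proposition \ref{rmnp15} the nullity function $x\mapsto\dim\ker(df_x)$ is locally constant. If $f$ were not an immersion, choose $x_0$ with $\dim\ker(df_{x_0})=m\ge 1$; then the nullity equals $m$ on a connected neighbourhood $U$ of $x_0$, so $f|_U$ has constant rank $k=\dim M-m<\dim M$, and the rank theorem provides charts around $x_0$ and $f(x_0)$ in which $f$ takes the form $(t_1,\dots,t_n)\mapsto(t_1,\dots,t_k,0,\dots,0)$. On any open box in the source chart this map identifies points differing only in the last $n-k$ coordinates, contradicting the injectivity of $f$ (the chart maps are diffeomorphisms onto their images). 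Hence $df_x$ is injective for every $x$, so $f$ is an immersion, and being injective it is an injective immersion.

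Next, for (2) and (3), connectedness of $M$ upgrades ``locally constant rank'' to a single constant rank $k$ on all of $M$. For (2), suppose $k<\dim N$. By item (3) of the preceding theorem each $x\in M$ has a neighbourhood $U_x$ with $f(U_x)$ nowhere dense in $N$; covering $M$ by countably many such sets (second countability) exhibits $f(M)$ as a countable union of nowhere dense sets, hence meager, so $f(M)\neq N$ by the Baire category theorem, contradicting surjectivity. Therefore $k=\dim N$, $df_x$ is surjective for all $x$, and $f$ is a surjective submersion. For (3), the last clause of the preceding theorem says that when $f$ is open or closed onto its image, $f(M)$ is a submanifold of $N$, and locally $f(U)$ is a $k$-dimensional submanifold with $T_{f(x)}f(U)=\mathrm{range}(df_x)$, so $\dim f(M)=k$; the corestriction $\widetilde f:M\to f(M)$ is then a smooth map of constant rank $k=\dim f(M)$, hence a submersion by the rank theorem, and it is surjective by construction. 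Finally, (4) follows by combining (1) and (2): if $f$ is bijective and $M$ is connected, then every $df_x$ is both injective and surjective, hence a linear isomorphism; thus $\dim M=\dim N$, $f$ is a bijective local diffeomorphism, and therefore a diffeomorphism.

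I expect the only genuinely non-formal step to be ruling out $k<\dim N$ in part (2): this is the one place a global topological input (Baire category, or equivalently the fact that the image of a constant-rank map of rank $<\dim N$ has measure zero) is needed, and it tacitly uses second countability of the manifolds. Everything else is bookkeeping with the local normal form and the already-quoted structural theorem.
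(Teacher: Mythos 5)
The paper does not actually prove Theorem \ref{rmnt20}: it is quoted verbatim from Abraham--Marsden--Ratiu and Dieudonn\'e with only a citation, so there is no argument of the authors' to compare yours against. Your proof is essentially the standard textbook derivation and is correct. Part (1) via the constant-rank normal form is fine (note the small notational slip: the source coordinates should run to $\dim M$, not $n=\dim N$). Part (2) correctly uses connectedness to get a single global rank $k$ and then item (3) of the preceding theorem plus second countability and the Baire category theorem to rule out $k<\dim N$; this is indeed the one nontrivial global input, and you are right that it tacitly uses second countability of $M$ (standard in the paper's setting) and that $N$ is a Baire space. In part (3) you should acknowledge that smoothness of the corestriction $\widetilde f:M\to f(M)$ uses that $f(M)$ is an embedded submanifold (which is what the quoted structural theorem provides), and that $\dim f(M)=k$ follows because the local images $f(U)$ are $k$-dimensional submanifolds covering $f(M)$; once the corestriction has constant rank equal to the dimension of its target it is automatically a submersion, with no further appeal to the rank theorem needed. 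Part (4) as a combination of (1) and (2) plus the inverse function theorem is correct. In short: your proposal supplies a complete proof where the paper supplies only a reference, and apart from the cosmetic issues above there is no gap.
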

We have following lemma from the thesis of first author \cite{rbthesis}.
\begin{lem}\label{lem:closednopen}
Let $m$ and $n$ be two natural numbers. Further, let $M(m\times n,\mathbb{R})$ denote the inner-product space  of all $m\times n$ matrices with real entries. Let $r$ be any  integer. Define $$O_r	=\{A|A\in M(m\times n,\mathbb{R}),\; \mathrm{rank}(A)\geqslant r\}\; \text{and} $$
$$C_r	=\{A|A\in M(m\times n,\mathbb{R}),\; \mathrm{rank}(A)\leqslant r\}$$. Then,   for each $r$, $O_r$  and $C_r$ are respectively
  open and closed in $M(m\times n,\mathbb{R})$.
\end{lem}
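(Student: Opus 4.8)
The plan is to prove that $O_r$ is open and $C_r$ is closed by exhibiting $C_r$ as the zero locus of a finite collection of continuous functions, namely the $(r+1)\times(r+1)$ minors, and then observing that $O_{r+1}$ and $C_r$ partition $M(m\times n,\mathbb{R})$ in a complementary way. First I would recall the elementary linear-algebra fact that a matrix $A$ has $\mathrm{rank}(A)\leqslant r$ if and only if every $(r+1)\times(r+1)$ submatrix of $A$ has vanishing determinant; this is the standard characterization of rank via minors. Consequently, writing $\{\Delta_\alpha\}_{\alpha}$ for the finite family of all such minors (each $\Delta_\alpha$ being a polynomial in the entries of $A$, hence continuous on $M(m\times n,\mathbb{R})$), we have
\begin{equation*}
  C_r=\bigcap_{\alpha}\Delta_\alpha^{-1}(\{0\}),
\end{equation*}
which is an intersection of closed sets and therefore closed.

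Next I would handle $O_r$. The key observation is that $\mathrm{rank}(A)\geqslant r$ is equivalent to $\mathrm{rank}(A)>r-1$, i.e. to $A\notin C_{r-1}$; hence $O_r=M(m\times n,\mathbb{R})\setminus C_{r-1}$. Since $C_{r-1}$ is closed by the previous paragraph (applying the minors argument with $r$ replaced by $r-1$, and noting the edge cases where $r-1<0$ gives $C_{r-1}=\varnothing$ and $O_r$ is the whole space, or where $r-1\geqslant\min(m,n)$ gives $C_{r-1}$ the whole space and $O_r=\varnothing$), its complement $O_r$ is open. This also directly shows $C_r$ is closed for the remaining value of $r$ without separate argument, so the two claims are really one.

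Alternatively, and perhaps more in the spirit of the geometric results used elsewhere in the paper, one can prove $O_r$ open directly: if $\mathrm{rank}(A)\geqslant r$, pick an $r\times r$ submatrix of $A$ with nonzero determinant; by continuity of that particular minor as a function on $M(m\times n,\mathbb{R})$, it stays nonzero on a neighborhood of $A$, and on that neighborhood the rank is at least $r$. This gives an explicit open neighborhood of each point of $O_r$ inside $O_r$. I would present the minors-characterization route as the main argument since it cleanly yields both statements at once.

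I do not anticipate a genuine obstacle here; the only thing requiring a moment of care is bookkeeping the degenerate ranges of $r$ (negative $r$, or $r$ exceeding $\min(m,n)$), where $O_r$ or $C_r$ degenerates to $\varnothing$ or the whole space — both of which are trivially open and closed — so the statement holds vacuously or trivially in those cases. The substantive content is entirely the classical fact that rank is controlled by the vanishing and non-vanishing of minors, combined with continuity (indeed polynomiality) of determinants in the matrix entries.
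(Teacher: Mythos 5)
Your proof is correct and rests on exactly the same idea as the paper's: the characterization of rank by vanishing/non-vanishing of minors together with continuity (polynomiality) of the determinant in the entries, plus the complementarity $O_{r+1}=C_r^{c}$. The paper merely works in the dual direction — proving $O_r$ open directly as a union of sets where some $r\times r$ minor is nonzero (your ``alternative'' route) and deducing closedness of $C_r$ from $C_r=O_{r+1}^{c}$ — while you prove $C_r$ closed first; the content is identical.
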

\begin{proof}
Since $C_r=O_{r+1}^\complement$, it suffices to show that $O_r$ is open  in $M(m\times n,\mathbb{R})$ for each integer $r$.
We dispose off the special cases when $r\leqslant 0$ or $r > \min\{m,n\}$. If $r \leqslant 0$, then $O_r=M(m\times n,\mathbb{R})$ which is  open  in $M(m\times n,\mathbb{R})$. On the other hand if $r > \min\{m,n\}$, then $O_r$ is the empty set  which too is open in $M(m\times n,\mathbb{R})$.

Next, suppose $0 < r \leqslant \min\{m,n\}$. In any $m\times n$ matrix the number of minors of size $r$
is given by $\binom{m}{r} \binom{n}{r}$. Denote this number by $k$.  Let $p_i:M(m\times n,\mathbb{R})\rightarrow M(r\times r,
\mathbb{R})$  for $ i \in \{1,2,\ldots, k\}$ be the projection maps corresponding to these $k$ minors. Let $\delta:M(r\times r,\mathbb{R})\rightarrow \mathbb{R}$ be the determinant function. Since $p_i$ and $ \delta$ are continuous functions, the composition $\delta\circ p_i$ is a continuous function from $M(m\times n,\mathbb{R})$ to $\mathbb{R}$  for each $i \in \{1,2,\ldots,k\}$. Consequently, for each $i\in\{1,2,\ldots,k\}$, the set $U_i= (\delta\circ p_i)^{-1}(\mathbb{R}\backslash\{0\})$ is open. Now, every matrix in $M(m\times n,\mathbb{R})$ of rank at least $r$ belongs to at least one of $U_i$ for $i\in\{1,2,\ldots,k\}$. Thus, $O_r=\bigcup_{i=1}^{k}U_i$ is  open in $M(m\times n,\mathbb{R})$.
\end{proof}
\noindent Lemma \ref{lem:closednopen} leads to the following theorem.
\begin{thm}\label{thm:closednopen}
Let $V$ and $W$ be finite--dimensional real inner--product spaces.
Let r be any integer. Define
$ O_r=\{f\,|\,f:V\rightarrow W,\; f \mathrm{\;is\;linear},\;\mathrm{rank}(f)\geqslant r\} $ and
$C_r=\{ f\,|\,f:V\rightarrow W,\; f \mathrm{\;is\;linear},\;      \mathrm{rank}(f) \leqslant r\}.$ Then, for each $r$,   $O_r$  and $C_r$ are respectively open and closed in Hom($V,W$).
\end{thm}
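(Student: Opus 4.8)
The plan is to deduce this from the matrix version, Lemma \ref{lem:closednopen}, by transporting everything into coordinates. Set $n=\dim V$ and $m=\dim W$, and fix once and for all a basis $\mathcal{B}$ of $V$ and a basis $\mathcal{C}$ of $W$. Associating to each linear map $f:V\to W$ its matrix $[f]_{\mathcal{B}}^{\mathcal{C}}\in M(m\times n,\mathbb{R})$ defines a linear isomorphism $\Phi:\mathrm{Hom}(V,W)\to M(m\times n,\mathbb{R})$. Since both are finite--dimensional real vector spaces, $\Phi$ is automatically a homeomorphism for the norm topologies (all norms on a finite--dimensional space being equivalent), in particular for the topology induced by the inner product on $\mathrm{Hom}(V,W)$ and the entrywise topology on $M(m\times n,\mathbb{R})$.

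The one thing to check is that $\Phi$ respects rank: for every $f$ one has $\mathrm{rank}(f)=\mathrm{rank}\bigl([f]_{\mathcal{B}}^{\mathcal{C}}\bigr)$, because the rank of a linear map is the dimension of its image and coordinatization carries the image of $f$ isomorphically onto the column space of $[f]_{\mathcal{B}}^{\mathcal{C}}$. Consequently $\Phi(O_r)=\{A\in M(m\times n,\mathbb{R}):\mathrm{rank}(A)\geqslant r\}$ and $\Phi(C_r)=\{A\in M(m\times n,\mathbb{R}):\mathrm{rank}(A)\leqslant r\}$, which are exactly the sets $O_r$ and $C_r$ of Lemma \ref{lem:closednopen}. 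By that lemma these image sets are open and closed in $M(m\times n,\mathbb{R})$, respectively; since $\Phi$ is a homeomorphism, $O_r=\Phi^{-1}(\Phi(O_r))$ is open and $C_r=\Phi^{-1}(\Phi(C_r))$ is closed in $\mathrm{Hom}(V,W)$, for every integer $r$.

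There is no serious obstacle here: the substantive content has already been absorbed into Lemma \ref{lem:closednopen}, and what remains is only the bookkeeping that coordinatization is a homeomorphism in finite dimensions and that it preserves rank. I would also remark that the inner--product structure plays no essential role beyond selecting which norm topology we use — rank is a purely linear--algebraic invariant, so any basis (orthonormal or not) serves equally well. If one preferred a self--contained argument one could instead repeat the minors argument directly in $\mathrm{Hom}(V,W)$ (each $r\times r$ minor of $[f]_{\mathcal{B}}^{\mathcal{C}}$ is a polynomial, hence continuous, function of $f$, so $O_r$ is a finite union of preimages of $\mathbb{R}\setminus\{0\}$), but this merely re--proves Lemma \ref{lem:closednopen} in a new guise, so invoking the lemma is cleaner.
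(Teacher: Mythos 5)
Your proposal is correct and follows exactly the route the paper intends: the paper simply asserts that Lemma \ref{lem:closednopen} "leads to" Theorem \ref{thm:closednopen}, and your argument supplies the standard bookkeeping (coordinatization via fixed bases is a rank-preserving linear homeomorphism in finite dimensions) that justifies that assertion. No gaps.
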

We have following  lemma

\begin{lem}\label{krsn2}
Let V and W be finite--dimensional real inner--product spaces and let $T:V\rightarrow W$ be a linear transformation. Suppose $\{T_k\}$ is a sequence in Geom ($V,W$) which converges to $T$. Suppose that, $\forall k$,   $r_k= \mathrm{conformality\; factor\; of}\; T_k$ and $0<p\le r_k$, for some real number p.  Then, terms of the sequence $\{T_k\}$ eventually have the same rank as $T$.
\end{lem}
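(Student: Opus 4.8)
The plan is to split the claim into two one-sided rank comparisons and combine them; write $r=\mathrm{rank}(T)$. For the lower bound I would invoke Theorem~\ref{thm:closednopen}: the set $O_r\subset\mathrm{Hom}(V,W)$ of linear maps of rank at least $r$ is open and contains $T$, so $T_k\to T$ forces $\mathrm{rank}(T_k)\ge r$ for all large $k$. This half uses neither the geometric structure of the $T_k$ nor the bound $p$.

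For the upper bound, that $\mathrm{rank}(T_k)\le r$ eventually, I would argue by contradiction. If it fails there is a subsequence $\{T_{k_j}\}$ with $\mathrm{rank}(T_{k_j})\ge r+1$. Since each $T_{k_j}$ is a geometric function, rank--nullity gives $\dim\mathrm{Conf}(T_{k_j})=\mathrm{rank}(T_{k_j})\ge r+1$, so I can pick an $(r+1)$-dimensional subspace $D_j\subseteq\mathrm{Conf}(T_{k_j})$ on which $T_{k_j}$ is conformal with factor $r_{k_j}$, together with an orthonormal basis $e^j_1,\dots,e^j_{r+1}$ of $D_j$. The $(r+1)$-tuples of orthonormal vectors of $V$ form a compact set, so after passing to a further subsequence $e^j_a\to e_a$ with $\{e_1,\dots,e_{r+1}\}$ orthonormal. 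Finite-dimensionality makes $T_{k_j}v_j\to Tv$ whenever $v_j\to v$ (the norms $\|T_{k_j}\|$ are bounded), so from $\langle T_{k_j}e^j_a,T_{k_j}e^j_b\rangle=r_{k_j}\delta_{ab}$ I first read off, taking $a=b=1$, that $r_{k_j}=\|T_{k_j}e^j_1\|^2\to\|Te_1\|^2=:r_\infty$ (the conformality factors are not assumed convergent a priori; convergence is derived here), and then that $\langle Te_a,Te_b\rangle=r_\infty\delta_{ab}$ for all $a,b$. This is the one place the hypothesis $r_{k_j}\ge p>0$ enters: it forces $r_\infty\ge p>0$, so $Te_1,\dots,Te_{r+1}$ are mutually orthogonal and nonzero, hence linearly independent, giving $\mathrm{rank}(T)\ge r+1$ and contradicting $\mathrm{rank}(T)=r$. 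Thus $\mathrm{rank}(T_k)\le r$ for large $k$, and with the lower bound, $\mathrm{rank}(T_k)=r=\mathrm{rank}(T)$ eventually.

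I expect the main obstacle to be the compactness/limiting step in the upper-bound argument: one must extract fixed-dimensional subspaces $D_j$ from the possibly varying-dimensional Conf subspaces, pass to a convergent subsequence of orthonormal frames, and justify interchanging the limit with the pairings $\langle T_{k_j}e^j_a,T_{k_j}e^j_b\rangle$. Each point is routine in finite dimensions (boundedness of $\|T_{k_j}\|$, closedness of the orthonormality conditions), but the bookkeeping with nested subsequences needs care. It is worth stressing in the write-up that the positive lower bound $p$ is exactly what prevents the limiting frame from collapsing and the rank from genuinely dropping in the limit.
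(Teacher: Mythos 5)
Your proof is correct, and its second half takes a genuinely different route from the paper's. The lower-bound half ($\mathrm{rank}(T_k)\ge \mathrm{rank}(T)$ eventually, from openness of $O_{\mathrm{rank}(T)}$ in Theorem \ref{thm:closednopen}) is exactly what the paper does. For the upper bound the paper argues directly rather than by contradiction: choosing $K_1$ with $\|T_k-T\|<\tfrac{\sqrt{p}}{2}$ for $k>K_1$, it notes that a unit vector $\alpha\in\ker(T)$ satisfies $\|T_k\alpha\|=\|T_k\alpha-T\alpha\|<\tfrac{\sqrt{p}}{2}$, whereas a unit vector in $\mathrm{Conf}(T_k)$ satisfies $\|T_k\alpha\|\ge\sqrt{r_k}\ge\sqrt{p}$; hence $\ker(T)\cap\mathrm{Conf}(T_k)=\{0\}$ for all $k>K_1$, and a short projection argument (writing a basis of $\ker(T)$ as $u_i=v_i+w_i$ with $v_i\in\ker(T_k)$, $w_i\in\mathrm{Conf}(T_k)$ and checking the $v_i$ stay independent) gives $\dim\ker(T_k)\ge\dim\ker(T)$, i.e.\ $\mathrm{rank}(T_k)\le\mathrm{rank}(T)$, with no subsequences or compactness. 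You use the hypothesis $r_k\ge p>0$ at the morally identical point --- to keep $\|T_{k_j}e\|$ bounded away from $0$ for unit $e$ in the Conf subspace --- but deploy it through compactness of orthonormal $(r+1)$-frames and a limiting frame whose image under $T$ consists of $r+1$ pairwise orthogonal nonzero vectors. Both arguments are valid; the paper's is shorter, gives an explicit threshold, and avoids the nested-subsequence bookkeeping you correctly identify as the delicate part of your version, while yours is more conceptual and yields as a byproduct that the conformality factors converge along the offending subsequence. If you write yours up, do spell out that $\dim\mathrm{Conf}(T_{k_j})=\mathrm{rank}(T_{k_j})$ follows from the direct sum $V=\ker(T_{k_j})\oplus\mathrm{Conf}(T_{k_j})$ together with rank--nullity, and that $r+1\le\dim V$ so the required orthonormal frames exist.
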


\begin{proof}
Let the dimension of $V$ be $n$ and rank($T$) be $r$. For each natural number $k$, let $X_k$ and $Y_k$ denote, respectively, the kernel of $T_k$ and Conf subspace corresponding to  $T_k$.

 Since   $T_k\rightarrow T$, there exists a natural number $K_1$ such that for any natural number $k> K_1$, we have $\|T_k-T\| < \frac{\sqrt{p}}{2}$. We claim that for each natural number  $k>K_1$, the intersection $\mathrm{ker}(T)\cap Y_k=\{0\}$.  To prove this, it suffices to show that if
$\alpha\in\mathrm{ker}(T)$ with $\|\alpha\|=1$, then $\alpha\not\in Y_k$, $\forall  k>K_1$. For $\alpha\in\mathrm{ker}(T)$ with $\|\alpha\|=1$,  we have ,  $\forall  k>K_1$, $\|T_k(\alpha)\|=\|T_k(\alpha)-T(\alpha)\|\leqslant\|T_k-T\|\|\alpha\| < \frac{\sqrt{p}}{2} $. Also, $\alpha\in Y_k$ implies that  $\|T_k(\alpha)\|\ge \sqrt{p}$.  So, we conclude that $\alpha\not\in Y_k,\; \forall k>K_1$.\\
Fix a natural number $k>K_1$ and  suppose that $\{u_i\}_{i=1}^{n-r}$ is a basis of ker($T$). Since each $T_k$ is a geometric function, we have $V=X_k\oplus Y_k$. Thus, we can find $v_i\in X_k$ and $w_i\in Y_k$ such
that $u_i=v_i+w_i$ for all $i\in\{1,2,\ldots,n-r\}$. We claim that $\{v_i\}_{i=1}^{n-r}$ is linearly independent. Suppose that for some scalars $\beta_i$, we have $\sum_{i=1}^{n-r} \beta_i v_i =0$. Since each $v_i= u_i-w_i$, we get $\sum_{i=1}^{n-r} \beta_i u_i =\sum_{i=1}^{n-r} \beta_i w_i$ which implies that $\sum_{i=1}^{n-r} \beta_i u_i \in Y_k$. From our previous observation that $\mathrm{ker}(T)\cap Y_k=\{0\}$, we have $\sum_{i=1}^{n-r} \beta_i u_i =0$ which by linear independence implies that $\beta_i=0$ for all $i\in\{1,2,\ldots, n-r\}$. Conclude that $\{v_i\}_{i=1}^{n-r}$ is linearly independent and hence rank($T_k)\leqslant$ rank($T$).  Since, from Theorem \ref{thm:closednopen}, $\{S:rank(T)\le rank(S)\}$ is open, there exists $\epsilon>0$ such that $B_{\epsilon}(T)\subset \{S:rank(T)\le rank(S)\}$. Since $\{T_n\}$ is eventually in $B_{\epsilon}(T)$, for some natural number $K_2 $,  $k> K_2\Rightarrow rank(T_k)\ge rank(T)$. Hence  there exists a natural number $K=\max\{K_1,K_2\}$ such that for all $k>K$, we have rank($T_k)=$ rank($T$).
 \end{proof}
 \begin{prop}\label{rmnp16}
Let $f:M\rightarrow N$ be a conformal Riemannian  morphism. If the conformality factor $\wedge_f$ is bounded below by a positive number, then  f is a subimmersion. In particular, if M is compact, then  f is a subimmersion.
\end{prop}
\begin{proof}
Let the dimensions of $M$ and $N$ be $m$ and $n$ respectively and $\wedge_f:M\to \mathbb{R}$ be a smooth function such that such that $c<\wedge_f(x), \;\forall x\in M,$ for some number $c> 0.$  Define $\rho:M \rightarrow \mathbb{Z}$  by $x\mapsto \mathrm{rank}(df_x)$. We shall  show that $\rho$ is a locally constant function on $M$.

On  the contrary suppose that $\rho$ is not locally constant at a $p\in M$.  Choose  co--ordinate charts $(U,\phi)$   and $(V,\psi)$ around p and $f(p)$ respectively. Let $g$ denote the function $\psi\circ f\circ\phi^{-1}:\phi(U)\rightarrow \psi(V)$.  Since $f$ is a conformal Riemannian morphism, for each $y\in \phi(U)$ we may view $dg_y:\mathbb{R}^m\rightarrow\mathbb{R}^n$ as a geometric function between real inner--product spaces. And since $\phi$ and $\psi$ are diffeomorphisms,   the function  $y\mapsto \mathrm{rank}(dg_{y})$ is not locally constant at $q=\phi(p)$. Hence, there exists a sequence $\{q_k\}$ in $\phi(U)$ such that $q_k\rightarrow q$ but  has infinitely many terms     $q_l$ with          $\mathrm{rank}(dg_{q_l})\ne \mathrm{rank}(dg_q)$, i.e. the sequence $\mathrm{rank}(d g_{q_k})$ is not eventually constant. This contradicts Lemma \ref{krsn2}  and makes untenable our assumption that $\rho$ is not locally constant at $p$.  If M is compact then the conformality factor $\wedge_f$ being a continuous real valued function on a compact space is bounded below by a positive number.

\end{proof}

\begin{defn}\label{rb557}
    We define a smooth function $f:M\to N$   to be   \textbf{generalized conformal map}   if there exists a  positive real number $\lambda_f$ such that $df_x$ is a geometric function  with conformality factor $\lambda_f,$ for all $x\in M.$ 
Clearly, a generalized conformal map $f:M\to N$  is a conformal Riemannian morphism with a constant conformality factor $\wedge_f(x)=\lambda_f$, $\forall x\in M$.  Converse is not true because conformality factor of f, $\wedge_f(x)$ may not be a constant function.
\end{defn}
\begin{thm}\label{rb558}
  Let $f:M\to N$ be a smooth function,  M be  compact and $dim M \ge 2 \text{rank}(df_x),$ for all $x\in M$ . Then f is    subimmersion if and only if it is  a generalized conformal map.
\end{thm}
\begin{proof} Let f be a subimmersion.
Without any loss of generality we assume that M is connected. Hence $\text{rank}(df_x)=k$, for some fixed integer k, $\forall x\in M$. This implies $\text{rank}(df_x^*df_x)=k$, $\forall x\in M$.  By Wielandt-Hoffman inequality  for Hermitian matrices  \cite{Hf-Wt} singular value function  $S$  defined on the set of $n\times n$ real symmetric matrices by $S(A)=(\lambda_1(A), \cdots, \lambda_n(A))$, where $\lambda_1(A)\le \lambda_2(A)\le \cdots\le  \lambda_n$ are eigen values of A,  is continuous. Since   $x\to df_x^*df_x$ is smooth and S is continuous,  $x\to S df_x^*df_x$ is continuous. We define $E_f:M\to \mathbb{R}$ by $E_f(x)=\text{ smallest  positive eigen value of} (df_x)*(df_x),$ $\forall x\in M.$ Since  $df_x^*df_x$  has all its eigen values nonnegative,$S (df_x^*df_x)=(0,0,\cdots, 0, \lambda_1(df_x^*df_x), \cdots, \lambda_k(df_x^*df_x))$. Clearly  $E_f(x)=\lambda_1(df_x^*df_x)$. So $E_f$ is continuous. So, since M is compact, $E_f$ is bounded below by a positive number $\beta_f$. So $0<\beta_f\le \lambda_1(df_x^*df_x)$, $\forall x\in M.$  As in the proof of Proposition \ref{rb556} if we choose $0<\alpha < \beta_f\le \lambda_1(df_x^*df_x)$ then $df_x$ is geometric with conformality factor $\sqrt{\alpha}$, $\forall x\in M$.  Converse part follows from  the Proposition \ref{rmnp16}.
\end{proof}
We state following proposition from \cite{Fischer}
\begin{prop}\label{rmnp18}
  A map $f:M_1\to N$  from a manifold $M_1$ to a Riemannian manifold $N$ is an immersion iff the pullback tensor $f^*g_N$ is a Riemannian metric on M. In this case, $f:(M_1,f^*g_N)\to (N,g_N)$  is an isometric immersion.
\end{prop}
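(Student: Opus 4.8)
The plan is to unwind the definition of the pullback tensor and reduce everything to a pointwise linear-algebra statement about $df_x$. Recall that $f^*g_N$ is the symmetric $(0,2)$-tensor on $M$ given by $(f^*g_N)(x)(X,Y)=g_N(f(x))(df_xX,df_xY)$ for $X,Y\in T_xM$. First I would note that this is a smooth tensor field (being built from the smooth map $f$, its differential, and the smooth metric $g_N$), and that it is always positive \emph{semi}-definite: for any $X\in T_xM$ we have $(f^*g_N)(x)(X,X)=g_N(f(x))(df_xX,df_xX)\ge 0$ because $g_N$ is positive definite. So the only thing that can fail for $f^*g_N$ to be a Riemannian metric is strict positivity.

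The key step is the pointwise equivalence: for a fixed $x\in M$, the bilinear form $(f^*g_N)(x)$ is positive definite if and only if $df_x$ is injective. Indeed, if $df_x$ is injective and $X\ne 0$, then $df_xX\ne 0$, so $g_N(f(x))(df_xX,df_xX)>0$; conversely, if $df_x$ is not injective, pick $0\ne X\in\ker(df_x)$, and then $(f^*g_N)(x)(X,X)=g_N(f(x))(0,0)=0$, so the form is degenerate, hence not positive definite. Combining this with the previous paragraph, $f^*g_N$ is a Riemannian metric on $M$ (i.e.\ positive definite at every point) if and only if $df_x$ is injective for every $x\in M$, which is exactly the statement that $f$ is an immersion.

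For the final assertion, suppose $f$ is an immersion, so that $f^*g_N$ is a Riemannian metric on $M$. Then by the very definition of the pullback tensor, $(f^*g_N)(x)(X,Y)=g_N(f(x))(df_xX,df_xY)$ for all $X,Y\in T_xM$ and all $x\in M$; comparing with Definition~\ref{rmn3} with $\wedge_f\equiv 1$, this says precisely that $f:(M,f^*g_N)\to(N,g_N)$ is an isometric immersion (the injectivity of each $df_x$, i.e.\ $\ker(df_x)=\{0\}$, being what we just established).

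There is no real obstacle here; the proposition is essentially a restatement of definitions, and the only mildly non-trivial point worth spelling out is the elementary observation that pulling back a positive definite form along a linear map yields a positive definite form exactly when that linear map is injective. I would keep the proof to a few lines emphasizing that single equivalence.
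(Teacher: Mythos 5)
Your argument is correct. Note that the paper itself gives no proof of this proposition --- it is quoted from Fischer's paper without argument --- so there is nothing to compare against; your write-up (positive semi-definiteness of the pullback always holds, and pointwise positive definiteness of $(f^*g_N)(x)$ is equivalent to injectivity of $df_x$, with the isometric-immersion claim then being a restatement of the definition of the pullback) is exactly the standard proof one would supply, and it is complete.
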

Now we show that locally a conformal Riemannian morphism is composition of a conformal Riemannian submersion and isometric immersion. This gives information about local structure of conformal Riemannian morphisms.
\begin{prop}\label{rmnp19}
Let $f:M\rightarrow N$ be a conformal Riemannian  morphism  such that conformality factor $\wedge_f$ is bounded below by a positive number. Then $f$ is composition of a conformal Riemannian submersion and isometric immersion.

\end{prop}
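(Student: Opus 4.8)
The plan is to exploit the hypothesis on $\wedge_f$ to force $f$ to have locally constant rank, feed this into the constant rank theorem to obtain the required factorization near each point, and then equip the intermediate manifold with the pullback metric so that both factors acquire the desired type.

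First I would invoke Theorem \ref{rmn17} (equivalently Proposition \ref{rmnp16}): since $\wedge_f$ is bounded below by a positive number, $f$ is a subimmersion and the rank function $x\mapsto\mathrm{rank}(df_x)$ is locally constant. Fix $p\in M$ and let $r$ be the common value of $\mathrm{rank}(df_x)$ on a connected neighborhood of $p$. By the definition of subimmersion at $p$ (Definition \ref{rmnd1}), there are an open set $U\ni p$, a manifold $P$, a submersion $S:U\to P$ and an immersion $J:P\to N$ with $f|_U=J\circ S$; here $\dim P=r$, so $dS_x$ is onto $T_{S(x)}P$ and $dJ_{S(x)}$ is injective for every $x\in U$.

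Next I would put the metric $g_P:=J^{*}g_N$ on $P$. Because $J$ is an immersion, Proposition \ref{rmnp18} guarantees that $g_P$ is a Riemannian metric on $P$ and that $J:(P,g_P)\to(N,g_N)$ is an isometric immersion. It then remains to verify that $S:(U,g_M|_U)\to(P,g_P)$ is a conformal submersion in the sense of Definition \ref{rmn2}. Surjectivity of $dS_x$ holds since $S$ is a submersion. From $df_x=dJ_{S(x)}\circ dS_x$ and the injectivity of $dJ_{S(x)}$ we get $\ker(dS_x)=\ker(df_x)$. Taking $H_x=\mathrm{Conf}(df_x)$ (the Conf subspace furnished by the conformal Riemannian morphism structure of $f$), we have $T_xM=H_x\oplus\ker(df_x)=H_x\oplus\ker(dS_x)$, and for all $X,Y\in H_x$,
\[
g_P(dS_xX,dS_xY)=(J^{*}g_N)(dS_xX,dS_xY)=g_N(df_xX,df_xY)=\wedge_f(x)\,g_M(x)(X,Y).
\]
Hence $S$ is a conformal submersion with Riemannian factor $\wedge_f|_U$, and $f|_U=J\circ S$ exhibits $f$ near $p$ as the composition of a conformal submersion followed by an isometric immersion.

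I expect the only real obstacle to be bookkeeping rather than anything deep: one must track that the factorization given by the rank theorem is smooth and that $\dim P=\mathrm{rank}(df)$, which is exactly what makes $dS_x$ onto and $dJ_{S(x)}$ injective, and therefore makes $\ker(dS_x)=\ker(df_x)$ so that the same horizontal subspace $H_x$ works for $S$. One should also flag that the construction is intrinsically local — different points of $M$ may call for different $U$ and $P$ — so the assertion is to be read locally around each point, and no global factorization is claimed.
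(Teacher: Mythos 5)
Your proposal is correct and follows essentially the same route as the paper: both invoke the subimmersion property from Theorem \ref{rmn17}, take the local factorization $f|_U=J\circ S$ from Definition \ref{rmnd1}, endow $P$ with the pullback metric $J^{*}g_N$ so that Proposition \ref{rmnp18} makes $J$ an isometric immersion, and then check that $S$ is a conformal submersion on $\mathrm{Conf}(df_x)$ with factor $\wedge_f$. Your version is slightly more careful than the paper's in spelling out that $\ker(dS_x)=\ker(df_x)$ and that the same horizontal subspace $H_x=\mathrm{Conf}(df_x)$ therefore serves for $S$, but this is elaboration of the same argument rather than a different one.
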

\begin{proof}
From Proposition \ref{rmnp16}, f is a subimmersion. For $x\in M,$ let U, P, S and J be as in Definition \ref{rmnd1} for a subimmersion, so that $f_U=J\circ S.$ Let $g_U=(g_M){|U}$ denote the restriction  of the Riemannian metric to the open set U of M, and let $g_P=J^*g_N,$ be the pullback tensor on P. We show that  $(U,g_U)$ and $(P,g_P)$ are Riemannian manifolds, the submersion $S:(U,g_U)\to (P,g_P)$ is a conformal Riemannian submersion, and the immersion $J:(P,g_P)\to (N,g_N)$ is an isometric immersion.
Since U is open in M, $(U,g_U)$ is an open Riemannian submanifold of $(M,g_M)$. Since J is an immersion by Proposition \ref{rmnp18} $g_P$ is a Riemannian metric on P, and $J:(P,g_P)\to (N,g_N)$ is an isometric immersion. Let $\wedge$ be the conformality factor associated to $f$.  Since $f$ is a conformal Riemannian morphism, $g_N(f(x))(df_xX,df_xY)=\wedge(x)g_M(x)(X,Y),$ for all $X,Y\in \mathrm{Conf}(df_x).$ Now  $g_N(f(x))(df_xX,df_xY)=\wedge(x)g_M(x)(X,Y),$ for all $X,Y\in \mathrm{Conf}(df_x),$ if and only if $\wedge(x)g_M(x)(X,Y)=g_N(f(x))(dJ_{S(x)}\circ dS_xX,dJ_{S(x)}\circ dS_xY),$ for all $X,Y\in \mathrm{Conf}(df_x), $ if and only if $\wedge(x)g_M(x)(X,Y)=g_P(S(x))(dS_xX, dS_xY),$ for all $X,Y\in \mathrm{Conf}(df_x). $ Hence S is a conformal Riemannian submersion.
\end{proof}

\begin{thm}\label{rmn30}
  Let $f:M\rightarrow N$ be a conformal Riemannian  morphism such that conformality factor $\wedge_f$ is bounded below by a positive number. Then
  \begin{enumerate}
    \item If $f$ is injective, then $f$ is an injective conformal immersion.
    \item If $f$ is surjective and M is connected, then  $f$ is a surjective  conformal Riemannian submersion
    \item More generally,  if $f$ is open or closed onto its image, then $f(M)$ is submanifold of N. Let $g^N_{f(M)}$ denote  the Riemannian metric induced on $f(M)$ by the metric $g_N$. Then $f:(M,g_M)\to (f(M),g^N_{f(M)})$ is a surjective conformal submersion onto $f(M)$.
    \item If $f$ is bijective and M is connected, then $f$ is a conformal map.
  \end{enumerate}
  \end{thm}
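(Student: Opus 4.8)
The plan is to obtain each of the six assertions uniformly, by feeding the conclusions of the subimmersion results already proved (Theorem \ref{rmn17}, Proposition \ref{rmnp19}, Theorem \ref{rmnt20}) through the extra datum carried by a conformal Riemannian morphism, namely the identity $g_N(f(x))(df_xX,df_xY)=\wedge_f(x)\,g_M(x)(X,Y)$ which holds for all $X,Y\in\mathrm{Conf}(df_x)$. Assertion (1) is precisely Theorem \ref{rmn17}. For assertion (2), Proposition \ref{rmnp19} already exhibits $f$ locally as $J\circ S$ with $S$ a conformal submersion and $J$ an isometric immersion in the notation of Definition \ref{rmnd1}; it then suffices to shrink the chart domain $U$ (hence the auxiliary manifold $P$) so that $J$ becomes injective, which is possible because an immersion is locally injective.

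For assertion (3): $f$ is a subimmersion by (1), and being injective it is an immersion by Theorem \ref{rmnt20}(1); hence $\ker(df_x)=\{0\}$ and so $\mathrm{Conf}(df_x)=T_xM$ for every $x\in M$. The conformality identity then holds on all of $T_xM$, which is exactly Definition \ref{rmn3} for a conformal immersion. For assertion (4): $f$ is a subimmersion and, being surjective with $M$ connected, it is a submersion by Theorem \ref{rmnt20}(2); taking $H_x=\mathrm{Conf}(df_x)$ gives $T_xM=H_x\oplus\ker(df_x)$ together with the conformality identity on $H_x$, so the three clauses of Definition \ref{rmn2} hold and $f$ is a surjective conformal submersion. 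For assertion (6): $f$ is a diffeomorphism by Theorem \ref{rmnt20}(4), so $df_x$ is an isomorphism, $\ker(df_x)=\{0\}$, $\mathrm{Conf}(df_x)=T_xM$, and the conformality identity becomes $f^{*}g_N=\wedge_f\,g_M$ with $\wedge_f$ a positive smooth function; thus $f$ is a conformal map.

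Assertion (5) needs one genuine check beyond this bookkeeping. By Theorem \ref{rmnt20}(3), under its hypotheses $f(M)$ is a submanifold of $N$ and the corestriction $f:M\to f(M)$ is a surjective submersion; moreover, by the structure theorem for subimmersions quoted earlier, $T_{f(x)}f(M)=\mathrm{range}(df_x)$, so $df_x$ maps $T_xM$ into $T_{f(x)}f(M)$. Endow $f(M)$ with the induced metric $g^N_{f(M)}$. Since $g^N_{f(M)}$ agrees with $g_N$ on $T_{f(x)}f(M)\times T_{f(x)}f(M)$, the identity $g_N(f(x))(df_xX,df_xY)=\wedge_f(x)\,g_M(x)(X,Y)$ valid on $\mathrm{Conf}(df_x)$ is simultaneously the conformality identity for $f:(M,g_M)\to(f(M),g^N_{f(M)})$; combined with the submersion property this gives Definition \ref{rmn2}, so $f$ is a surjective conformal submersion onto $f(M)$.

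I expect no serious obstacle: the analytic content is entirely contained in Proposition \ref{rmnp16}, Theorem \ref{rmn17} and Proposition \ref{rmnp19}, the positive lower bound on $\wedge_f$ being used only to invoke Theorem \ref{rmn17}, and connectedness being used only where Theorem \ref{rmnt20} requires it. The one point that is not pure bookkeeping is the metric restriction in (5): one must check that replacing $g_N$ by the induced metric on $f(M)$ does not spoil conformality, and this holds precisely because $df_x$ lands in $\mathrm{range}(df_x)=T_{f(x)}f(M)$, where the two metrics coincide.
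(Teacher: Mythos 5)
Your proposal is correct and follows essentially the same route as the paper, which simply cites Theorem \ref{rmn17}, Proposition \ref{rmnp19} and Theorem \ref{rmnt20} in a one-line proof; you have merely supplied the bookkeeping (in particular the check in (5) that the induced metric on $f(M)$ agrees with $g_N$ on $\mathrm{range}(df_x)=T_{f(x)}f(M)$) that the paper leaves implicit. Note also that your conclusion in (4) --- surjective \emph{conformal} submersion rather than Riemannian submersion --- is the statement actually supported by the argument and by the paper's abstract; the word ``Riemannian'' in the theorem statement appears to be a slip.
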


\begin{proof} From Proposition \ref{rmnp19} and Theorem \ref{rmnt20}, we conclude (1), (2) and  (3). From (1) and (2), we conclude (4).
\end{proof}

\section*{References}

%

\end{document}